\author{Lennart Ronge}
\title{Hadamard expansions for powers of causal Green's operators and ``resolvents''}
\date{\null}
\def\<{\left<}
\def\>{\right>}
\def\Z{\mathbb{Z}}
\def\C{{\mathbb{C}}}
\def\R{{\mathbb{R}}}
\def\N{{\mathbb{N}}}
\def\insum#1{\sum\limits_{#1=0}^\infty}
\def\O {\backslash \{0\}}
\def\D{\mathcal{D}}
\def\K{\mathcal{K}}
\def\casedist#1#2#3#4{
	\begin{cases}
		#1& \text{, if } #2\\
		#3& \text{, if } #4 \\
\end{cases}}
\newtheorem{df}{Definition}[section]
\newtheorem*{dfn*}{Definition/Notation}
\newtheorem{lemma}[df]{Lemma}
\newtheorem{thm}[df]{Theorem}
\newtheorem*{thm*}{Theorem}
\newtheorem{prop}[df]{Proposition}
\newtheorem{cor}[df]{Corollary}
\newtheorem{gn}[df]{Fixed Notation}
\newtheorem{rem}[df]{Remark}
\newtheorem{defprop}[df]{Definition/Proposition}
\DeclareMathOperator{\Dom}{Dom}
\DeclareMathOperator{\supp}{supp}
\DeclareMathOperator{\Exp}{Exp}
\DeclareMathOperator{\grad}{grad}
\begin{document}
	\maketitle
	\begin{abstract}
		We derive an asymptotic expansion analogous to the Hadamard expansion for powers of advanced/retarded Green's operators associated to a normally hyperbolic operator $P$, as well as expansions for advanced/retarded Green's operators associated to $P-z$ for $z\in \C$. These expansions involve the same Hadamard coefficients as the original Hadamard expansion.
	\end{abstract}
	\section{Introduction}
	The Hadamard expansion is a formal expansion in differentiability orders of Green's operators associated to a wave operator. It takes the form 
	\[\K(G)\sim \insum{k}V_kR(2k+2),\]
	where $\K(G)$ denotes the Schwartz kernel of a Green's operator, the $V_k$ are sections in two variables known as the Hadamard coefficients and $R$ denotes a Lorentzian analogue to powers of the distance function. There are Hadamard expansions both for advanced/retarded and for Feynman Propagators, all involving the same Hadamard coefficients and different distributions $R$. A construction for both types of Hadamard expansions, showing both their similarities and differences, can be found e.g. in the appendix of \cite{BS}.
	
	The Hadamard expansions give a canonical way to split off a singular part from a Green's operator. This is used e.g. in the renormalization of the Stress-Energy Tensor on curved spacetimes (see e.g. \cite{DeFo}) and the Lorentzian version of the local APS index theorem (\cite{BS}). The Hadamard coefficients are also interesting in themselves, as they are the Lorentzian analogue for Riemannian heat kernel coefficients and encode geometric properties like the scalar curvature.
	
	In \cite{DaWr}, Dang and Wrochna develop Hadamard expansions for Feynman resolvents and their powers and use these to investigate a spectral action on Lorentzian scattering spaces. In \cite{DaWr2}, they use these expansions to investigate a generalized Wodzicki residue.
	
	Motivated by their work, we seek to develop similar expansions for powers of the advanced/retarded Green's operators. While the expansions we obtain are similar to those for the Feynman propagators in \cite{DaWr}, the methods used to derive them are quite different.
	
	In Section \ref{s2}, we introduce the setting and background material. In order to state the main results below and to have everything gathered in one place, we give a summary of the core definitions here: 
	
	\begin{dfn*}
		$P$ is a normally hyperbolic operator, acting on sections of a vector bundle $E$ over a globally hyperbolic manifold $M$. $U\subseteq M$ is an open, geodesically convex, globally hyperbolic and causally compatible subset. $\K(T)$ denotes the Schwartz kernel of an operator $T$. $G^\pm$ denotes the advanced/retarded Green's operator for $P$ (\ref{dfG}). $R_\pm$ denotes the Riesz distributions on $U$ (\ref{dfRiesz}). $V^k$ denotes the Hadamard coefficients (\ref{Vdef}) associated to $P$ on $U$. $\sim$ denotes an asymptotic expansion in differentiability orders (\ref{dfsim}).
	\end{dfn*}

	In Section \ref{s3}, we develop an expansion for powers of $G^\pm$ (see Theorem \ref{PowExp}) -- for $m\in \Z$, we have	
		\[\K(G^{\pm m})\sim\insum{k} \binom{m+k-1}{k}V^{k}R_\pm(2k+2m).\]
	
	In Section \ref{s4}, we develop asymptotic expansions for $G^\pm_{P-z}$, the advanced/retarded Green's operator for $P-z$, with $z\in\C$. We first show the formula for the Hadamard-coefficients of $P-z$,
	\[V^k(z)=\sum\limits_{m=0}^k\binom{k}{m}z^mV^{k-m},\]
	 which immediately gives a double-sum expansion in which the $z$-dependence is explicit (see Corollary \ref{doubleexp}):
		\[\K(G^{\pm}_{P-z})\sim\insum{k,m}\binom{k+m}{k}z^mV^{k}R_\pm(2k+2m+2).\]
	We then go on to define $z$-dependent analogues $R_\pm(z,2k)$ of the even Riesz distributions, with which we can write the expansion for $G^\pm_{P-z}$ in the same form as that for $G^\pm$ only with the $z$-dependent Riesz distributions instead of the usual ones (see Theorem \ref{Hadamexpz}):
		\[\K(G^{\pm}_{P-z})\sim \insum{k}V^{k}R_\pm(z,2k+2),\]
		or more generally
		\[\K(G^{\pm}_{P-z}\null^m)\sim \insum{k}\binom{k+m-1}{k}V^{k}R_\pm(z,2k+2m).\]

	The results of this paper are from the author's Ph. D. Thesis (\cite{Ron}), where they are somewhat separate from the other content. The aim of this paper is to provide the reader with an independent, shorter and more easily readable exposition of these results. For this reason, the setup and proofs that do not immediately concern the main results are kept more brief here, a more detailed description can be found in the original thesis.
	
	\subsection*{Acknowledgements}
	I would like to thank Matthias Lesch and Koen van den Dungen for their advice and support during the writing of both the thesis that this paper is based on and the paper itself.
	
	\section{Preliminaries}
	\label{s2}
	
	\subsection{Setting}
	Throughout this paper, we will assume that $M$ is a globally hyperbolic Lorentzian manifold with metric $g$ and $P$ is a normally hyperbolic operator, acting on sections of a vector bundle $E$ over $M$.	Normally hyperbolic means $P$ is a second order differential operator whose principal symbol is given by the minus the Lorentzian metric. Global hyperbolicity of $M$ is just needed to guarantee the existence and uniqueness of Green's operators.
	\begin{rem}
		As $M$ and $P$ are arbitrary, theorems proved and definitions made for $M$ or $P$ will hold for arbitrary globally hyperbolic manifolds and arbitrary normally hyperbolic operators. This will be exploited occasionally. Note that the restriction of a normally hyperbolic operator to an open subset is again normally hyperbolic.
	\end{rem}
\subsection{Notation}
We will often encounter definitions and statements that work in both directions of time symetrically, usually distinguished by an index $+$ or $-$. In order to avoid writing everything twice, we use the symbol $\pm$ to denote both cases at once. The use of $\pm$ or $\mp$ in a definition or theorem indicates that this should hold both with the upper sign chosen everywhere and with the lower sign chosen everywhere in that definition or theorem. In some cases we also need to change the wording depending on the choice of time alignment. In that case we will use a `/' to indicate that the first word is to be used with the upper sign and the second word is to be used with the lower sign.

For example, the statement `If $A$ is past/future compact, $J_\pm(A)\cap J_\mp(x)$ is compact.' means `If $A$ is past compact, $J_+(A)\cap J_-(x)$ is compact and if $A$ is future compact, $J_-(A)\cap J_+(x)$ is compact.'.

We write $J_\pm(x)$ for the causal future/past of a point or set $x$.
We write $\K(T)$ for the Schwartz kernel of an operator $T$. An operator defined on functions on $M$ that is applied to a function on $M\times M$ is taken to act in the first component, i.e.
\[(Tf)(y,x):=(T(f(\cdot,x)))(y).\]
More generally, an operator defined on distributions in $E$ is taken to act on distributions in $E\boxtimes F$ (for any vector bundle $F$) in the first component, i.e. for pure tensor products
\[T(f\otimes g):=(Tf)\otimes g.\]
This is defined so that for operators $T,S$ acting on sections of $E$, we have
\[\K(TS)=T\K(S).\]

\subsection{Spaces of sections}
In order to be able to define powers of Green's operators, we need to define them on the right domains. This requires us to talk about sections with past or future compact support.
\begin{df}
	A subset of $M$ is called past/future compact if and only if its intersection with the past/future of every point in $M$ is compact.
\end{df}
We shall need the following spaces of sections and distributions:
\begin{df}
	We denote by $\Gamma(E)$ the space of smooth sections and by $\Gamma^k(E)$ the space of k-times continuously differentiable sections of $E$. We denote by $\D'(E)$ the space of distributions in $E$. We use subscripts to indicate restrictions on the support. We write these for $\Gamma$, but also use the analogous notation with $\Gamma^k$ and $\D'$.
	\begin{itemize}
		\item For $A\subseteq M$, $\Gamma_A(E)$ denotes sections supported in $A$.
		\item $\Gamma_c(E)$ denotes compactly supported sections.
		\item $\Gamma_\pm (E)$ denotes sections with past/future compact support.
	\end{itemize}
\end{df}
All these spaces carry canonical topologies such that all inclusions are continuous and $\Gamma_c(E)$ is dense in all spaces. For example, $\Gamma_A(E)$ carries the topology induced by all $C^k$-norms on compact subsets and $\Gamma_\pm(E)$ carries the limit topology of all $\Gamma_A(E)$ with $A$ past/future compact. See \cite[section 2]{GH} for a more detailed description of these spaces.
\subsection{Green's operators}
With this setup, the advanced and retarded Green's operators can be defined as inverses of $P$ on the appropriate spaces.
\begin{defprop}
	\label{dfG}
	$P$ is invertible as a map of $\D'_\pm(E)$ to itself. The Green's operator $G^\pm_P$ is defined to be the inverse of $P$ on $\D'_\pm(E)$. This is continuous and restricts to a continuous map of $\Gamma_\pm (E)$ to itself. We omit the subscript $P$, if it is clear what operator we are talking about.
	For any $f\in \D'_\pm(E)$, we have $\supp(G^\pm f)\subseteq J_\pm(\supp(f))$.
\end{defprop}
This follows from \cite[Theorem 3.8, Corollary 3.11 and Lemma 4.1]{GH} (that $P$ is Green's hyperbolic and thus the previous results apply, holds e.g. by \cite[Corollary 3.4.3]{BGP}). It is more customary to define Green's operators as maps $\Gamma_c(E)\rightarrow \Gamma(E)$. This is equivalent to the definition used here via restriction/continuous extension. The definition we use has the advantage that (integer) powers of Green's operators are immediately well defined, as the Green's operators are automorphisms.

\subsection{Suitable domains}
Most considerations in this paper will not take place on all of $M$, but on suitable subsets of $M$:
\begin{df}
	\label{dfGE}
	A subset $U$ of $M$ shall be called GE, if it is open, geodesically convex, globally hyperbolic, and causally compatible. 
\end{df}
\begin{gn}
	From now on, fix an arbitrary GE set $U$ in $M$.
\end{gn}
Openness is required to restrict distributions to $U$. Geodesic convexity will be required to define Hadamard coefficients and Riesz distributions. Global hyperbolicity ensures that $U$ has unique Green's operators, while causal compatibility ensures that those agree with the restriction of the Green's operators on $M$. Indeed, by \cite[Proposition 3.5.1.]{BGP} (and continuity), we have for any $f\in \D'_\pm(E)$:
	\[G^\pm_{P|_U}(f|_U)=(G^\pm_{P}f)|_U.\]
In the following, we will not distinguish between $G^\pm_{P}$ and $G^\pm_{P|_U}$ notationally, as the previous equality guarantees that this does not lead to ambiguity. Moreover, we will often omit the restriction of kernels to $U\times U$ from the notation.

\begin{rem}
	Again, any theorem and definition made in reference to $U$ works for arbitrary GE subsets of globally hyperbolic manifolds. This includes Lorentzian vector spaces as GE subsets of themselves.
\end{rem}
The notion of GE sets is also sufficiently general to investigate local properties around any point: \cite[Corollary 2 and Remark 14]{Min} imply that every point in $M$ has a neighborhood basis of GE-sets.
\subsection{Riesz distributions and Hadamard coefficients}
\label{Rieszhadamard}
In this section, we introduce Riesz distributions and Hadamard coefficients, which will play a central role in this paper. We will follow \cite{BGP} here, they are also described in \cite {Gun} and \cite {Fri}.
As all constructions rely heavily on the exponential map, the objects will only be defined on convex subsets of a Lorentzian manifold. We will define them for $U$ and omit the index $U$ after the definition, unless we explicitly want to use them for a different set.

The Riesz distributions on $U$ are roughly speaking a Lorentzian analogue to `powers of the distance function'. They are described in detail in \cite[sections 1.2 and 1.4]{BGP}. Here, they  will play a role comparable to that of powers of $x$ in a Taylor series.
\begin{df}
	\label{dfgamma}
	For $x,y\in U$, define
	\[\Gamma^U(y,x):=-g(exp_x^{-1}(y),exp_x^{-1}(y)).\]
\end{df}
This is the Lorentzian analogue of the Riemannian distance squared. $\Gamma(y,x)$ is positive if and only if $y$ is in the future or past of $x$. We now define the Riesz distributions (see \cite[Definitions 1.2.1 and 1.4.1, Lemma 1.2.2 and Proposition 1.4.2]{BGP} for the definition and the following remarks)
\begin{defprop}
	\label{dfRiesz}
	For $\alpha\in \C$ with real part $\Re(\alpha)>d$ and $x\in U$, define Riesz distributions $R^{U}_\pm(\alpha)(\cdot,x)$ as the distribution on $U$ given by the function
	\[R^U_\pm(\alpha)(y,x)=\casedist{c_\alpha\Gamma(y,x)^{\frac{\alpha-d}{2}}}{x\in J_\pm(0)}{0}{x\notin J_\pm(0)}\]
	with
	\[c_\alpha:=\frac{2^{1-\alpha}\pi^\frac{2-d}{2}}{\Gamma(\frac{a}{2})\Gamma(\frac{\alpha-d+2}{2})}.\]
	The map $\alpha\mapsto R^{U}_\pm(\alpha)(\cdot,x)$ is holomorphic as a map into $\D'(U)$ and extends uniquely to a holomorphic map on all of $\C$. For arbitrary $\alpha\in\C$, define $R^{U}_\pm(\alpha)(\cdot,x)$ to be the value of this holomorphic extension. 
	This defines Riesz distributions $R^{U}_\pm(\alpha)$ on $U\times U$ as suggested by the notation:
	\[R^{U}_\pm(\alpha)[\psi]:=\int\limits_UR^{U}_\pm(\alpha)(\cdot,x)[\psi(\cdot,x)]dx\]
	for any test function $\psi\in C_c^\infty(U\times U)$.
\end{defprop}
The $\Gamma$ in the definition of $c_\alpha$ refers to the gamma function, not the $\Gamma$ defined above. The prefactor is chosen such that 
\[\square R^{\R^d}_\pm(\alpha+2)=R^{\R^d}_\pm(\alpha).\]
 The precise form of $c_\alpha$ will not be relevant in the following and there will be no further occurences of gamma functions. We have
 \[R_\pm(0)=\delta,\]
 i.e. the 0-th Riesz distribution is the Dirac distribution on the diagonal.
The Riesz distributions on $U$ can be obtained from those on Minkowski space via the exponential map:
\[R^{U}_\pm(\alpha)(\cdot,x)=(exp_x)^{-1*}R^{T_xM}_\pm(\alpha)(\cdot,0).\]
They are supported in the causal past/future of the basepoint:
\[\supp(R^{U}_\pm(\alpha)(\cdot,x))\subseteq J_\pm(x).\]
As $\Gamma$ vanishes at the boundary of the causal past and future, looking at the formula for $\Re(\alpha)>d$ shows that $R_\pm(\alpha)$ is $C^n$ if $\Re(\alpha)>d+2n$.

On Minkowski space, the Riesz distributions $R^{\R^d}_\pm(2k)$  for $k\in \N$ can be thought of as fundamental solutions to $\square^k$:
\begin{prop}
	\label{RFS}
	On a Lorentzian vector space $V$, we have
	\[ R_\pm^V(2m)=G^{\pm m}_{\square}\delta,\]
	where $\delta$ denotes the dirac distribution on the diagonal and $\square$ the d'Alembertian on $V$.
\end{prop}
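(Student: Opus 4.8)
The plan is to reduce everything to two facts about Riesz distributions already recorded above: the differential recursion $\square R^V_\pm(\alpha+2)=R^V_\pm(\alpha)$, valid for all $\alpha\in\C$ (stated there for $\R^d$, hence true on any Lorentzian vector space $V$ of the same dimension after a linear isometry, since the Riesz distributions transport along such isometries), and the initial value $R^V_\pm(0)=\delta$. Since on a vector space the d'Alembertian, the Riesz distributions and the Dirac distribution are all translation invariant, it suffices to prove the identity with the second basepoint fixed at the origin, i.e. $R^V_\pm(2m)(\cdot,0)=G^{\pm m}_\square\,\delta_0$, and then translate. Iterating the recursion $m$ times starting from $R^V_\pm(0)=\delta$ gives $\square^m R^V_\pm(2m)(\cdot,0)=\delta_0$.

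Next I would verify that $R^V_\pm(2m)(\cdot,0)$ actually lies in the space $\D'_\pm(V)$ on which $G^{\pm}_\square$ is defined. By the remarks above it is supported in $J_\pm(0)$, and in a Lorentzian vector space $J_+(0)$ is past compact while $J_-(0)$ is future compact, since $J_\pm(0)\cap J_\mp(p)$ is a compact causal diamond for every $p$ by global hyperbolicity. Hence $R^V_\pm(2m)(\cdot,0)\in\D'_\pm(V)$; likewise $\delta_0\in\D'_\pm(V)$ because $\{0\}$ is both past and future compact. As $\square$ is normally hyperbolic on the globally hyperbolic manifold $V$, Definition/Proposition \ref{dfG} applies and $G^{\pm}_\square$ is the automorphism of $\D'_\pm(V)$ inverting $\square$; therefore $G^{\pm m}_\square$ inverts $\square^m$ on $\D'_\pm(V)$. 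Applying $G^{\pm m}_\square$ to both sides of $\square^m R^V_\pm(2m)(\cdot,0)=\delta_0$ yields $R^V_\pm(2m)(\cdot,0)=G^{\pm m}_\square\,\delta_0$, and translating in the basepoint turns this into the claimed identity of Schwartz kernels $R^V_\pm(2m)=G^{\pm m}_\square\,\delta$ on $V\times V$.

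The argument is essentially bookkeeping once the two cited facts are in hand; the one place that genuinely needs attention is the membership $R^V_\pm(2m)(\cdot,0)\in\D'_\pm(V)$, i.e. the past/future compactness of $J_\pm(0)$, which is exactly where global hyperbolicity of $V$ enters and which lets us apply the uniqueness of Green's operators. A secondary point to keep in mind is that the recursion $\square R^V_\pm(\alpha+2)=R^V_\pm(\alpha)$ is elementary only for $\Re(\alpha)$ large, where $R^V_\pm$ is the explicit locally integrable function of the formula above; its validity at $\alpha=0,2,\dots,2m-2$ rests on the continuity of $\square$ on $\D'(V)$ together with the uniqueness of holomorphic extension, but this is precisely the content already imported from \cite{BGP}, so no further work is required there.
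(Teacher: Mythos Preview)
Your proof is correct and follows essentially the same approach as the paper: both arguments use the recursion $\square R^V_\pm(\alpha+2)=R^V_\pm(\alpha)$ together with $R^V_\pm(0)=\delta$, and then invoke that $G^\pm_\square$ inverts $\square$ on $\D'_\pm(V)$. Your version is slightly more explicit about checking the support condition $R^V_\pm(2m)(\cdot,0)\in\D'_\pm(V)$ before applying $G^{\pm m}_\square$, which the paper leaves implicit, but otherwise the content is the same.
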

\begin{proof}
	We have
	\[R_\pm^V(0)=\delta\]
	and
	\[\square_V R_\pm^V(2m+2)=R_\pm^V(2m).\]
	Thus
	\[R_\pm^V(2m+2)=G^\pm_{\square_V}R_\pm^V(2m).\]
	Iterating this, we obtain
	\[ R_\pm^V(2m)=G^{\pm m}_{\square_V}R_\pm^V(0)=G^{\pm m}_{\square_V}\delta.\qedhere\]
\end{proof}

We now turn to the second class of objects we want to define in this section: the Hadamard coefficients.
In order to define them, we introduce the following differential operator.
\begin{df}
	\label{dfrho}
	Define for $V\in \Gamma(E\boxtimes E^*)$:
	\[\rho^U V(y,x):= \nabla_{\grad\Gamma(y,x)}V(y,x)-\left(\frac{1}{2}\square\Gamma(y,x)-d\right)V(y,x),\]
	where $\nabla$ is the connection induced by $P$.
	
\end{df}
The reader may immediately forget the definition of $\rho$ and only remember the following property:
\begin{prop}
	\label{Prho}
	We have for $\alpha\in \C\O$, $x\in U$ and $V\in \Gamma(E\boxtimes E^*|_{U\times U})$
	\[ P(VR_\pm(\alpha+2))=(PV)R_\pm(\alpha+2)-\frac{1}{\alpha}((\rho-\alpha)V)R_\pm(\alpha)\]
\end{prop}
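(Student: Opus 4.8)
The plan is to prove the identity by a direct computation using the defining properties of $R_\pm(\alpha)$ together with the Leibniz rule for the normally hyperbolic operator $P$. The key input is the second-order Leibniz-type formula for $P$ acting on a product of a smooth section $V$ and a distribution, combined with the two structural facts about Riesz distributions: $\square R_\pm(\alpha+2) = R_\pm(\alpha)$ (here $\square$ is understood via the exponential map / the leading symbol of $P$) and the explicit form $R_\pm(\alpha)(y,x) = c_\alpha \Gamma(y,x)^{(\alpha-d)/2}$ for $\Re(\alpha)$ large, which lets us compute $\grad R_\pm(\alpha+2)$ and $\square R_\pm(\alpha+2)$ in terms of $\grad \Gamma$, $\square \Gamma$ and lower Riesz distributions. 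Since both sides of the claimed identity are holomorphic in $\alpha$ on $\C\O$ (the factor $\frac{1}{\alpha}$ being the only place a pole could occur, and it is explicitly excluded), it suffices to verify the identity for $\Re(\alpha)$ large, where everything is given by honest functions, and then invoke uniqueness of holomorphic continuation.

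The main steps I would carry out are as follows. First, write $P = \square + \text{(first order)} + \text{(zeroth order)}$ using the connection $\nabla$ induced by $P$; more precisely use the Weitzenböck-type form $P = -\tr_g \nabla^2 + B$ with $B$ of order zero, so that the principal part is exactly the geometric wave operator whose action on Riesz distributions is known. Second, apply $P$ to $VR_\pm(\alpha+2)$ and expand using the Leibniz rule: the terms are $(PV)R_\pm(\alpha+2)$, a cross term involving $\nabla V$ paired with $\grad R_\pm(\alpha+2)$, and $V\,\square R_\pm(\alpha+2)$. Third, for $\Re(\alpha)$ large, differentiate the explicit formula $R_\pm(\alpha+2) = c_{\alpha+2}\Gamma^{(\alpha+2-d)/2}$: one gets $\grad R_\pm(\alpha+2) = c_{\alpha+2}\tfrac{\alpha+2-d}{2}\Gamma^{(\alpha-d)/2}\grad\Gamma$, and a similar but longer expression for $\square R_\pm(\alpha+2)$ involving both $\grad\Gamma$ and $\square\Gamma$. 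Fourth, collect all the terms proportional to $R_\pm(\alpha)$ (i.e. to $\Gamma^{(\alpha-d)/2}$), keeping careful track of the ratio $c_{\alpha+2}/c_\alpha$, and recognize the combination $\nabla_{\grad\Gamma}V - (\tfrac12\square\Gamma - d)V - \alpha V = (\rho - \alpha)V$ together with the overall constant $-\tfrac{1}{\alpha}$; the terms proportional to $R_\pm(\alpha+2)$ must combine to exactly $(PV)R_\pm(\alpha+2)$, which is automatic from how $\rho$ was defined (this is in fact the motivation for the definition of $\rho$). Fifth, observe that all the intermediate manipulations use only the known identity $\square R_\pm(\alpha+2)=R_\pm(\alpha)$ and the Riesz recursion for the $c_\alpha$, so they are valid distributionally for all $\alpha$ with $\Re(\alpha)$ large, hence by analytic continuation for all $\alpha \in \C\O$.

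The main obstacle I expect is the bookkeeping in computing $\square R_\pm(\alpha+2)$ and matching constants: one has to carefully use the known relation $\square \Gamma = 2d + O(\Gamma\cdot(\ldots))$-type information encoded in the transport equations, and in particular the fact that the second-order term $\tr_g(\nabla\grad\Gamma \otimes \nabla R)$ and the curvature-type corrections assemble correctly — this is exactly where the specific value of $c_\alpha$ (and the ratio $c_{\alpha+2}/c_\alpha = \tfrac{1}{\alpha}\cdot(\text{stuff})$) is forced. Rather than recomputing $\square R_\pm(\alpha+2)$ from scratch, the cleaner route is to cite the corresponding computation in \cite{BGP} (it is essentially the content of their Hadamard coefficient recursion, e.g. around \cite[Proposition 2.3.1 and Lemma 2.4.2]{BGP}), quote the resulting formula for $\square(VR_\pm(\alpha+2))$ in terms of $\rho$, and then note that the displayed identity is just that formula rearranged. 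The only genuinely new observation needed here is that the identity, once established for $\Re(\alpha)$ large, persists on $\C\O$ by the holomorphy of $\alpha\mapsto R_\pm(\alpha)$; everything else is either a direct computation or a quotation. I would therefore present the proof as: (i) reduce to $\Re(\alpha)$ large by holomorphy, (ii) compute via Leibniz and the explicit Riesz formula, citing \cite{BGP} for the hardest sub-identity, (iii) rearrange to obtain the stated form.
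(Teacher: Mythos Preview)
Your proposal is correct and follows essentially the same route as the paper: reduce to large $\Re(\alpha)$ by holomorphy, expand $P(VR_\pm(\alpha+2))$ via the Leibniz rule for $P$, and plug in the identities for $\grad R_\pm(\alpha+2)$ and $\square R_\pm(\alpha+2)$ (these are \cite[Proposition~1.4.2]{BGP}, not the flat-space relation $\square R_\pm(\alpha+2)=R_\pm(\alpha)$ you mention in passing---on $U$ one has $\alpha\,\square R_\pm(\alpha+2)=(\tfrac12\square\Gamma-d+\alpha)R_\pm(\alpha)$). With those two identities quoted, the computation is a few lines and no bookkeeping obstacle remains.
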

\begin{proof}
	By \cite[Proposition 1.4.2]{BGP}, the Riesz distributions satisfy
	\[2\alpha\grad(R_\pm(\alpha+2))=(\grad\Gamma) R_\pm(\alpha)\]
	and
	\[\alpha\square R_\pm(\alpha+2)=\left(\frac{1}{2}\square\Gamma-d+\alpha\right) R_\pm(\alpha).\]
	Using this and the Leibniz rule for normally hyperbolic operators, we calculate for $\Re(\alpha)>d+4$ (so $R_\pm(\alpha,x)$ is $C^2$ and we don't have to worry about distributions)
	\begin{align*}
		&\alpha P (VR_\pm(\alpha+2,x))\\
		=&\alpha\left((P V)R_\pm(\alpha+2,x)-2\nabla_{\grad R_\pm(\alpha+2,x)} V+V\square R_\pm(\alpha+2,x)\right)\\
		=&\alpha(P V)R_\pm(\alpha+2,x)-(\nabla_{\grad\Gamma} V) R_\pm(\alpha,x)+ V\left(\frac{1}{2}\square\Gamma_x-d+\alpha\right)R_\pm(\alpha,x)\\
		=&\alpha(P V)R_\pm(\alpha+2,x)-((\rho_x^U-\alpha)V)R_\pm(\alpha,x).
	\end{align*}
	As both sides are holomorphic, the equation holds for arbitrary $\alpha$.
\end{proof}
 Pretty much exactly this calculation is done in \cite[equation 2.1]{BGP} to motivate the definition of the Hadamard coefficients, which we will give now:
\begin{defprop}
	\label{Vdef}
	There is a unique family of smooth sections \[V^{k,U}\in \Gamma(E\boxtimes E^*|_{U\times U})\] (indexed by $k\in \N$) that satisfies the transport equations 
	\[(\rho-2k) V^{k,U}=2kP V^{k-1,U}\]
	(for $k=0$, the right hand side is set to $0$) subject to the initial condition
	\[V^{0,U}(x,x)=1.\]
	We call these the Hadamard coefficients.
\end{defprop}
The motivation for defining the Hadamard coefficients is that we want to have the formal equality
\[P\insum{k}V^{k}R_\pm(2k+2)=\delta,\]
which (together with support conditions) means that the infinite sum, if it existed, would be the Schwartz kernel of $G^\pm$. In general, the sum does not exist and we only get a form of asymptotic expansion.
\subsection{Asymptotic expansions in differentiability orders}
The Hadamard expansion, as well as the expansions developed in this paper, are not exact equalities. Rather, the quantity on the left hand side can be approximated up to arbitrary degree of differentiability by partial sums of the right hand side:
\begin{df}
	\label{dfsim}
	For distributions $F$ and $f_k$, we write
	\[F\sim\insum{k}f_k\]
	if and only if for any $m\in \N$ there is $N_0\in \N$ such that for all $N>N_0$, the difference
	\[F-\sum\limits_{n=0}^Nf_n\]
	is $m$ times continuously differentiable.
\end{df}
If we have multiple sums on the right hand side, we define this analogously (as one would do with limits).
\begin{rem}
	As both Green's kernels and Riesz distributions vanish outside the causal past/future of the second argument, a $C^k$-remainder in the following expansions also vanishes of order $k$ near the boundary of the past/future.
\end{rem}

	\section{Powers of Green's operators}
	\label{s3}
	We first derive an expansion for powers of the advanced/retarded Green's operators. Before we can start the main proof, we need to do some technical work to guarantee that the Green's operators do not decrease differentiability orders too much (locally). This will be necessary to control the remainder terms of our asymptotic expansion. The result is basically a consequence of mapping smooth sections to smooth sections continuously.
	\begin{prop}
		Let $A\subset M$ be past/future compact compact and $W\subset M$ be open and relatively compact. Let $r_W$ denote restriction to $W$. Then for every $n\in \N$ there is $m\in \N$ such that $r_W\circ G^\pm$ maps $\Gamma^m_A(E)$ to $\Gamma^n(E|_W)$ continuously.
	\end{prop}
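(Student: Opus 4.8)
The plan is to deduce the statement from the continuity of $G^\pm$ on \emph{smooth} sections by a purely soft argument: a continuous linear map from a Fréchet space into a Banach space is automatically bounded by a single one of the defining seminorms, so it controls only finitely many derivatives, and the relative compactness of $W$ is exactly what turns the target into such a Banach space.

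First I would fix the connection $\nabla$ induced by $P$, fibre and base metrics, and a compact exhaustion $(K_j)_{j\in\N}$ of $M$, so that for any past/future compact or compact $B$ the space $\Gamma_B(E)$ carries the increasing seminorms $p^B_j(u):=\max_{i\le j}\sup_{K_j}|\nabla^i u|$, with $p^B_m$ a continuous seminorm on $\Gamma^m_B(E)$. I treat the $G^+$ case, the $G^-$ case being the time reverse, so $A$ is past compact and $\overline W$ is compact. Since the Schwartz kernel of $G^+$ vanishes off $\{x\in J_-(y)\}$, one has $\supp(G^+v)\cap W=\emptyset$ whenever $\supp v$ is disjoint from $J_-(W)$; combining this with past compactness of $A$ and standard causality on globally hyperbolic manifolds, $A\cap J_-(\overline W)$ is compact. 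Choosing $\chi\in C^\infty_c(M)$ with $\chi\equiv1$ on a neighbourhood of the compact set $\overline{A\cap J_-(W)}$, we obtain for every section $f$ supported in $A$ that $\chi f$ is compactly supported and $(G^+f)|_W=(G^+(\chi f))|_W$. This localisation is what makes the density step below work: smooth sections supported in $A$ need not be dense among the $C^m$-sections supported in $A$, whereas smooth compactly supported sections are dense among $C^m$ compactly supported ones via mollification.

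Next I would fix a compact set $\widehat A$ containing a neighbourhood of $\supp\chi$. As $\widehat A$ is past compact, \ref{dfG} provides a continuous linear map $G^+\colon\Gamma_{\widehat A}(E)\to\Gamma(E)$; composing with the continuous restriction into the Banach space $C^n(\overline W,E)$ of $C^n$-sections over $\overline W$ and applying the seminorm-domination fact above yields $m\in\N$ and $C>0$ with $\|G^+g\|_{C^n(\overline W)}\le C\,p^{\widehat A}_m(g)$ for all smooth $g$ supported in $\widehat A$. Given $f\in\Gamma^m_A(E)\subseteq\D'_+(E)$, mollifying $\chi f$ in charts produces smooth $g_j$, supported in $\widehat A$ for large $j$, with $g_j\to\chi f$ uniformly up to order $m$; hence $(G^+g_j|_{\overline W})_j$ is Cauchy in $C^n(\overline W,E)$, with limit $\gamma$ satisfying $\|\gamma\|_{C^n(\overline W)}\le C\,p^{\widehat A}_m(\chi f)$. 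Since also $g_j\to\chi f$ in $\D'(E)$ with support in the fixed compact $\widehat A$, hence in $\D'_+(E)$, continuity of $G^+$ on $\D'_+(E)$ forces $G^+g_j|_W\to G^+(\chi f)|_W$ in $\D'(E|_W)$, so $G^+(\chi f)|_W=\gamma|_W$; with the identity from the previous paragraph this gives $(G^+f)|_W=\gamma|_W\in\Gamma^n(E|_W)$. As $p^{\widehat A}_m(\chi f)$ is bounded by a constant times a defining seminorm of $\Gamma^m_A(E)$, the composite $r_W\circ G^+\colon\Gamma^m_A(E)\to C^n(\overline W,E)\to\Gamma^n(E|_W)$ is continuous.

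The main obstacle, I expect, is not any one estimate but the topological bookkeeping: one must arrange things so that the Fréchet-to-Banach seminorm-domination lemma applies while the resulting map remains identified with the honest operator $G^\pm$ already defined on $\D'_\pm(E)$, and this is precisely what forces the cutoff-localisation and the mollification detour. The two substantive inputs — that lemma and the continuity of $G^\pm$ on $\Gamma_\pm(E)$ and on $\D'_\pm(E)$ — are standard, respectively already recorded in \ref{dfG}.
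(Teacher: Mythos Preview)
Your argument is correct and follows the same core strategy as the paper: use continuity of $G^\pm$ on smooth sections, apply seminorm domination into the Banach space $C^n(\overline W)$, then identify the resulting extension with the distributional $G^\pm$. The paper does this more directly, without introducing a cutoff: from the continuity of $G^\pm\colon\Gamma_A(E)\to\Gamma_{J_\pm(A)}(E)$ it extracts the bound $\|G^\pm\phi\|_{C^n(\overline W)}\le c\|\phi\|_{C^m(K)}$ for smooth $\phi$ supported in $A$, observes that the right-hand side is already a seminorm on $\Gamma^m_A(E)$, and then argues that the unique continuous extension coincides with $r_W\circ G^\pm|_{\Gamma^m_A(E)}$ because $C^m$-convergence implies distributional convergence and $G^\pm$ is distributionally continuous. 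Your cutoff-and-mollification detour is a more explicit handling of the density/extension step you flag; it is not strictly necessary, since the paper's estimate, once established on smooth sections, transfers directly via the distributional continuity already recorded in \ref{dfG}. What your route buys is transparency about why the extension exists and why it agrees with $G^\pm$; the paper's route buys brevity.
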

	\begin{proof}
		We know  that
		\[G^\pm\colon \Gamma_\pm(E)\rightarrow \Gamma_\pm(E)\]
		is continuous. Since the topologies of $\Gamma_{A}(M)$ and $\Gamma_{J_\pm(A)}(M)$ coincide with the subspace topology from $\Gamma_\pm$ (as the latter carries the limit topology), we know that the restriction
		\[G^\pm\colon \Gamma_{A}(E)\rightarrow \Gamma_{J_\pm(A)}(E)\]
		is also continuous. $C^k$- norms on ${\overline{W}}$ are seminorms on the target space. Thus by the seminorm-characterization of continuity, we know that for any $n\in \N$, there is $c\in\R$, $m\in \N$ and $K\subseteq M$  such that for all $\phi\in \Gamma_A(E)$
		\[\|G^\pm\phi\|_{C^n(\overline{W})}\leq c\|\phi\|_{C^m(K)}.\]
		The right hand side is also a seminorm of $\Gamma^m_A(E)$.
		Thus $r_W\circ G^\pm|_{\Gamma_{A}(E)}$ extends continuously to a map $\Gamma^m_A(E)\rightarrow \Gamma^n(E|_W)$. As $C^k$- convergence implies distributional convergence, this continuous extension must coincide with $r_W\circ G^\pm|_{\Gamma^m_{A}(E)}$, as the latter is distributionally continuous.
	\end{proof}
	As Hadamard coefficients and Green's kernels are sections in $E\boxtimes E^*$ and we want differentiability in both coordinates, we need to extend the above to the case where $G^\pm$ acts in the first component on sections of a box product.
	\begin{prop}
		\label{Greg}
		Let $A\subset M$ be past/future compact and $W\subset M$ be open and relatively compact. Let $O$ be some manifold and $F$ some vector bundle over $O$. Then for each $n\in \N$, there is $m\in \N$ such that for any $f\in \Gamma^m(E\boxtimes F)$ supported in $A\times O$, we have $G^\pm f|_{W\times O}\in \Gamma^n((E\boxtimes F)|_{W\times O})$, where $G^\pm f(y,x):=G^\pm (f(\cdot,x))(y)$.
	\end{prop}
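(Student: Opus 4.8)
The plan is to reduce Proposition \ref{Greg} to the previous proposition, where $G^\pm$ acts on genuine sections of $E$ over $M$ rather than on sections of a box product. The key observation is that, since the operator $G^\pm$ is being applied in the first variable and does nothing to the second variable $x\in O$, smoothness in the $x$-direction of $G^\pm f$ should follow for free from smoothness in $x$ of $f$, while smoothness in the $y$-direction is exactly what the previous proposition controls. The only subtlety is that one needs differentiability jointly in $(y,x)$ of the requested order $n$, so one must feed in enough $x$-derivatives at the start.

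First I would pick, using the previous proposition, an $m_0\in\N$ such that $r_W\circ G^\pm$ maps $\Gamma^{m_0}_A(E)$ to $\Gamma^n(E|_W)$ continuously, and set $m:=m_0+n$. Given $f\in\Gamma^m(E\boxtimes F)$ supported in $A\times O$, I would work locally on $O$: fix a chart and a local frame for $F$, so that near a point of $O$ the section $f$ is described by finitely many functions $f_j(y,x)$ with $x$ ranging over an open subset of $\R^{\dim O}$, each of class $C^m$ and supported (in $y$) in $A$. For every multi-index $\beta$ in the $x$-variables with $|\beta|\le n$, the section $y\mapsto \partial_x^\beta f_j(y,x_0)$ lies in $\Gamma^{m-n}_A(E)=\Gamma^{m_0}_A(E)$, and moreover $x\mapsto \partial_x^\beta f_j(\cdot,x)$ is a continuous — indeed $C^{n-|\beta|}$ — curve into $\Gamma^{m_0}_A(E)$ (continuity of $C^k$-dependence on a parameter is elementary since all derivatives up to order $m$ are jointly continuous and uniformly so on compacta).

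Next I would use continuity of $G^\pm\colon\Gamma^{m_0}_A(E)\to\Gamma^n(E|_W)$ to differentiate under $G^\pm$ in the parameter $x$: since $G^\pm$ acts only in $y$, it commutes with $\partial_x^\beta$, so $\partial_x^\beta\bigl(G^\pm f(\cdot,x)\bigr)=G^\pm\bigl(\partial_x^\beta f(\cdot,x)\bigr)$, and the right-hand side is a $C^{n-|\beta|}$ curve in $x$ with values in $\Gamma^n(E|_W)$ by composing the continuous parameter-dependence from the previous step with the bounded linear map $G^\pm$. Combining over all $|\beta|\le n$ gives that $G^\pm f$ has all mixed partial derivatives in $(y,x)$ up to total order $n$ and that they are continuous, i.e. $G^\pm f|_{W\times O}\in\Gamma^n\bigl((E\boxtimes F)|_{W\times O}\bigr)$. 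Since $O$ was covered by such charts and the conclusion is local on $O$, this finishes the argument.

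The main obstacle, and the only place requiring genuine care, is the interchange of $G^\pm$ with the $x$-derivatives and the verification that $G^\pm$ of a $C^k$-curve in $\Gamma^{m_0}_A(E)$ is again a $C^k$-curve in $\Gamma^n(E|_W)$. This is a standard fact — a continuous linear map between locally convex spaces sends differentiable curves to differentiable curves and commutes with difference quotients, hence with derivatives — but one should state it cleanly (e.g. via the difference-quotient characterization of the $x$-derivative in the Fréchet space $\Gamma^{m_0}_A(E)$, using that $\Gamma^m$-regularity of $f$ in all variables gives precisely that the difference quotients converge in $\Gamma^{m_0}_A(E)$). Everything else is bookkeeping with charts and frames.
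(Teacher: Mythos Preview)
Your proposal is correct and follows essentially the same route as the paper: reduce to the previous proposition to fix the loss of regularity, work locally in charts trivializing $E\boxtimes F$, and pull the $x$-derivatives through $G^\pm$ using continuity of $r_W\circ G^\pm\colon\Gamma^{m_0}_A(E)\to\Gamma^n(E|_W)$ together with the difference-quotient argument. The paper's proof is terser but makes exactly the same choices (its $m$ is your $m_0+n$) and invokes the same mechanism of iteratively moving second-variable derivatives inside the operator at the cost of one differentiability order each time.
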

	\begin{proof}
		As differentiability can be checked locally, i.e. in charts on which $E\boxtimes F$ is trivial, it suffices that the partial derivatives up to order $m$ of the component functions in these charts exist. Choose $m$ such that $G^\pm$ maps $\Gamma^{m-n}_{A}(E)$ to $\Gamma^n(E)$ continuously. As $\partial^I\circ G^\pm$ is continuous as a map into $C^{n-|I|}$-sections for any multiindex with $|I|<n$, the differential quotients for partial derivatives in the second component can be pulled inside the operator, where the limits are well-defined and give functions of one less differentiability order. Iterating this, starting with a $C^m$-section and moving all partial derivatives in the second coordinates inside, the argument stays at least $C^{m-n}$, so we obtain that all partial derivatives up to order $n$ of $G^\pm f$ are well defined and continuous.
	\end{proof}

	We can now prove the main theorem of this section:
	\begin{thm}
		\label{PowExp}
		Let $U\subseteq M$ be a GE set and let $m\in \Z$. Then we have the asymptotic expansion
		\[\K(G^{\pm m})|_{U\times U}\sim\insum{k} \binom{m+k-1}{k}V^{k,U}R^U_\pm(2k+2m).\]
		For $m<0$, the right hand side is a finite sum, as all summands with $k+m>0$ vanish, and we have equality.
	\end{thm}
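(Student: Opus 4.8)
The plan is a two-directional induction on $m$: I would go \emph{up} from $m=1$ by applying $G^\pm$ to the Schwartz kernel one step at a time, and \emph{down} from $m=0$ by applying $P$. For this I would abbreviate the claimed right-hand side on $U\times U$ by $H_m:=\insum{k}\binom{m+k-1}{k}V^{k}R_\pm(2k+2m)$ and its $N$-th partial sum by $H^N_m$; note that $\binom{m+k-1}{k}=0$ whenever $m\le0$ and $k+m>0$, so for $m\le0$ the series $H_m$ is a finite sum and agrees with $H^N_m$ for all large $N$. The common engine of both directions is an identity, extracted from Proposition \ref{Prho}, that relates $PH^N_{m+1}$ to $H^N_m$ up to a boundary term of high differentiability order.

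To derive it I would fix $m\ge1$, apply $P$ termwise to $H^N_{m+1}=\sum_{k=0}^N\binom{m+k}{k}V^kR_\pm(2k+2m+2)$, and feed in Proposition \ref{Prho} with $\alpha=2k+2m\neq0$ together with the transport equations $(\rho-2k)V^k=2kPV^{k-1}$; this gives
\[P\bigl(V^kR_\pm(2k+2m+2)\bigr)=(PV^k)R_\pm(2k+2m+2)-\tfrac{k}{k+m}(PV^{k-1})R_\pm(2k+2m)+\tfrac{m}{k+m}V^kR_\pm(2k+2m).\]
Weighting by $\binom{m+k}{k}$ and summing, the elementary identities $\binom{m+k}{k}\tfrac{k}{k+m}=\binom{m+k-1}{k-1}$ and $\binom{m+k}{k}\tfrac{m}{k+m}=\binom{m+k-1}{k}$ should make the $(PV^k)$- and $(PV^{k-1})$-families telescope and recast the third family as $H^N_m$, leaving only
\[PH^N_{m+1}=H^N_m+E^N_m,\qquad E^N_m:=\binom{m+N}{N}(PV^N)R_\pm(2N+2m+2).\]
Since $R_\pm(\alpha)$ is $C^n$ for $\Re\alpha>d+2n$, the differentiability order of $E^N_m$ tends to $\infty$ with $N$. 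The same computation carried out for $m\le-1$ directly on the finite sums produces no boundary term — and the one index where $\alpha$ could vanish carries coefficient $0$ — giving the exact equality $PH_{m+1}=H_m$ (the borderline case $m=0$, $PH_1=H_0=\delta$, is exactly the classical defining identity for the Hadamard coefficients). To evaluate the terms containing $R_\pm(0)=\delta$ in the $m\le-1$ case I would use that $\grad\Gamma$ and $\tfrac12\square\Gamma-d$ vanish on the diagonal, so that $(PV^k)(x,x)=-V^{k+1}(x,x)$.

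For $m\le0$ I would then argue directly and get equality. Since $V^0(x,x)=1$ and $R_\pm(0)=\delta$, $H_0=V^0\delta=\delta=\K(G^{\pm 0})$. For $m\le-1$, $P$ and $G^\pm$ commute on $\D'_\pm(E)$, so $G^{\pm m}=P\circ G^{\pm(m+1)}$ and hence $\K(G^{\pm m})=P\,\K(G^{\pm(m+1)})$; a downward induction using $PH_{m+1}=H_m$ then yields $\K(G^{\pm m})=H_m$ as an equality of distributions, with $H_m$ a finite sum — which is exactly the final assertion of the theorem.

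For the positive powers the base case $m=1$ is the classical Hadamard expansion $\K(G^\pm)\sim\insum{k}V^kR_\pm(2k+2)$ (see \cite{BGP} or the appendix of \cite{BS}). Assuming $\K(G^{\pm m})\sim H_m$ for some $m\ge1$, and given a target differentiability order, I would pick $N$ large enough that the remainder $r^N_m:=\K(G^{\pm m})-H^N_m$ and the term $E^N_m$ are both as differentiable as required, and then compute, using $\K(G^\pm G^{\pm m})=G^\pm\K(G^{\pm m})$ and the key identity,
\[\K(G^{\pm(m+1)})=G^\pm\bigl(PH^N_{m+1}-E^N_m\bigr)+G^\pm r^N_m=H^N_{m+1}-G^\pm E^N_m+G^\pm r^N_m,\]
so that $\K(G^{\pm(m+1)})-H^N_{m+1}=G^\pm r^N_m-G^\pm E^N_m$. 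After localizing to relatively compact open subsets of $U\times U$ and using finite propagation speed to replace $r^N_m$ and $E^N_m$ by compactly supported data, Proposition \ref{Greg} shows $G^\pm$ costs only boundedly many differentiability orders, so the right-hand side becomes arbitrarily differentiable as $N\to\infty$ and $\K(G^{\pm(m+1)})\sim H_{m+1}$. I expect the main obstacle to be twofold: getting the binomial factors in the key identity to cancel exactly in the telescoping, and the support bookkeeping in this last step that makes the regularity statements for $G^\pm$ applicable to the remainder kernels on $U\times U$.
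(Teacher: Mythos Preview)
Your proposal is correct and follows essentially the same route as the paper's proof: two-way induction anchored at $m=0$, driven by the same telescoping identity $PH^N_{m+1}=H^N_m+E^N_m$ with $E^N_m=\binom{m+N}{N}(PV^N)R_\pm(2N+2m+2)$, using Proposition~\ref{Greg} to control regularity in the upward step and the diagonal identity $(PV^k)(x,x)=-V^{k+1}(x,x)$ in the downward step. The only cosmetic differences are that the paper starts the upward induction at $m=0$ (thereby re-deriving the classical $m=1$ case rather than citing it) and treats the boundary term in the $m\le-1$ step explicitly --- your phrase ``no boundary term'' is slightly off, since $E^N_m$ at $N=-m-1$ is nonzero and supplies exactly the missing $k=-m$ summand of $H_m$, which is precisely where your diagonal identity enters.
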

	\begin{rem}
		Note that the special case $m=1$ reproduces the original Hadamard expansion.
	\end{rem}
	\begin{rem}
		In case $m<0$, this is actually a formula for the kernel of  powers of $P$. All even Riesz distributions of order $0$ or less are the same in the advanced and retarded case and supported only at the diagonal, so this expansion is not as surprising as it might seem on first glance. From the case $m=-1$, we obtain that 
		\[\K(P)=V^0R_\pm(-2)-V^1\delta.\]
		In particular, $P$ is uniquely determined by the metric $g$ (which determines the Riesz-distributions) and the first two Hadamard coefficients $V^0$ and $V^1$.
	\end{rem}
	\begin{proof}
		We proceed by two-way induction on $m$, showing that if the statement holds for $m\in \Z$, it also holds for $m+1$ if $m\geq 0$ and for $m-1$, if $m\leq 0$. The case $m=0$ follows from $R_\pm(0)=\delta$, as all summands excapt the first vanish.
		
		We make some preliminary calculations. For $k+m\neq 0$ we can use the characterization of $\rho$ (\ref{Prho}) and the transport equations (see \ref{Vdef}) to calculate
		\begin{align*}
		&\binom{k+m}{k}P(V^{k}R_\pm(2k+2m+2))\\
		&=\binom{k+m}{k}\Big(\frac{-1}{2k+2m}(\rho-2k-2m)V^{k}R_\pm(2k+2m)+(PV^{k})R_\pm(2k+2m+2)\Big)\\
		&=\binom{k+m}{k}\Big(\frac{-1}{2k+2m}((2kPV^{k-1}-2mV^{k})R_\pm(2k+2m))\\
		&+(PV^{k})R_\pm(2k+2m+2)\Big)\\
		&=\binom{k+m}{k}\frac{2m}{2k+2m}V^{k}R_\pm(2k+2m)-\binom{k+m}{k}\frac{2k}{2m+2k} (PV^{k-1})R_\pm(2k+2m)\\
		&+\binom{k+m}{k}(PV^{k})R_\pm(2k+2m+2)\\
		&=\binom{k+m-1}{k}V^{k}R_\pm(2k+2m)-\binom{k+m-1}{k-1} (PV^{k-1})R_\pm(2k+2m)\\
		&+\binom{k+m}{k}(PV^{k})R_\pm(2k+2m+2))\\
		\end{align*}
		In the special case $k=m=0$, we obtain:
		\begin{align*}
		P(V^{0}R_\pm(2))&=\lim\limits_{\alpha\rightarrow 0}P(V^{0}R_\pm(\alpha+2))\\
		&=\lim\limits_{\alpha\rightarrow 0}P(V^{0})R_\pm(\alpha+2)-(\frac{1}{\alpha}\rho V^{0}-V^{0})R_\pm(\alpha)\\
		&=\lim\limits_{\alpha\rightarrow 0}P(V^{0})R_\pm(\alpha+2)+ V^{0}R_\pm(\alpha)\\	
		&=P(V^{0})R_\pm(2)+ V^{0}R_\pm(0)\\
		&=P(V^{0})R_\pm(2)+ V^{0}\delta\\
		&=\delta+P(V^{0})R_\pm(2),
		\end{align*}
		where we used that $V_0$ is $1$ on the diagonal.
		Either way, we obtain for $k=0$:
		\[P(V^{0}R_\pm(2m+2))=V^{0}R_\pm(2m)+(PV^{0})R_\pm(2m+2)\]
		
		Putting everything together, for any $N\in\N$ in case $m\geq 0$ and for any $N<-m$ if $m<0$, we have
		\begin{align*}
		&P\sum\limits_{k=0}^N\binom{m+k}{k}V^{k}R_\pm(2k+2m+2)\\
		&=V^{0}R_\pm(2m)+(PV^{0})R_\pm(2m+2)+\sum\limits_{k=1}^N\binom{k+m-1}{k}V^{k}R_\pm(2k+2m)\\
		&-\binom{k+m-1}{k-1} (PV^{k-1})R_\pm(2k+2m) +\binom{k+m}{k}(PV^{k})R_\pm(2k+2m+2)\\
		&=\sum\limits_{k=0}^N\binom{k+m-1}{k}V^{k}R_\pm(2k+2m)-\sum\limits_{k=0}^{N-1}\binom{k+m}{k}(PV^{k})R_\pm(2k+2m+2)\\
		&+\sum\limits_{k=0}^{N}\binom{k+m}{k}(PV^{k})R_\pm(2k+2m+2)\\
		&=\sum\limits_{k=0}^N\binom{k+m-1}{k}V^{k}R_\pm(2k+2m)+\binom{N+m}{N}(PV^{N})R_\pm(2N+2m+2)\\
		&=:\sum\limits_{k=0}^N\binom{k+m-1}{k}V^{k}R_\pm(2k+2m)+E_N\\
		\end{align*}
		where 
		\[E_N:=\binom{N+m}{N}(PV^{N})R_\pm(2N+2m+2)\]
		is $C^{n}$ for $N\geq \frac{d}{2}+n-m$.
		
		\textbf{Case 1: $m+1\Rightarrow m$, $m<0$}\\
		Suppose for induction that
		\[\K(G^{\pm m+1})=\sum\limits_{k=0}^{-m-1}\binom{m+k}{k}V^{k}R_\pm(2k+2m+2)\]
		(all further summands are 0). Let $N=-m-1$. 
		We use the fact (see \cite[below Proposition 2.3.1]{BGP}) that on the diagonal $\Delta=\{(x,x)|x\in U\}$, we have
		\[PV^{N}|_{\Delta}=-V^{N+1}|_\Delta\]
		and the binomial identity
		\[\binom{-a}{b}=(-1)^b\binom{a+b-1}{b}\] 
		for integer $b$, which implies in particular
		\[\binom{-1}{b}=(-1)^b.\]
		 With these, we obtain
		\begin{align*}
		E_N&=\binom{-1}{N}(PV^{N})R_\pm(0)\\
		&=(-1)^N(PV^{N})\delta\\
		&=(-1)^{N+1}V^{N+1}\delta\\
		&=\binom{-1}{N+1}V^{N+1}R_\pm(0)\\
		&=\binom{-1}{-m}V^{-m}R_\pm(0).
		\end{align*}
		We obtain from the inductive hypothesis and our previous calculations:
		\begin{align*}
		\K(G^{\pm m})
		&=P\K(G^{\pm m+1})\\
		&=P\sum\limits_{k=0}^{N}\binom{m+k}{k}V^{k}R_\pm(2k+2m+2)\\
		&=\sum\limits_{k=0}^N\binom{k+m-1}{k}V^{k}R_\pm(2k+2m)+E_N\\
		&=\sum\limits_{k=0}^{-m-1}\binom{k+m-1}{k}V^{k}R_\pm(2k+2m)+\binom{-1}{-m}V^{-m}R_\pm(0)\\
		&=\sum\limits_{k=0}^{-m}\binom{k+m-1}{k}V^{k}R_\pm(2k+2m)\\
		\end{align*}
		
		\textbf{Case 2: $m\Rightarrow m+1$, $m\geq 0$}\\
		Fix $n\in \N$ and assume the theorem holds for some $m\geq 0$. We first show that the expansion holds on relatively compact GE subsets of $U$. Let $W,O\subseteq U$ be relatively compact (in $U$) and GE. Proposition \ref{Greg} implies that there is $n'\in \N$ such that $G^\pm$ maps $C^{n'}$-sections supported in  $J^U_\pm(\overline O)\times O$ to sections that are $C^n$ on $W\times O$. By the inductive hypothesis, we may choose $N\geq \frac{d}{2}+n'$ such that
		\[F_N:=G^{\pm m}-\sum\limits_{k=0}^N \binom{m+k-1}{k}V^{k}R_\pm(2k+2m)\]
		is a $C^{n'}$-section. We then have on $U\times U$
		\begin{align*}
		&\K(G^{\pm m+1})\\
		&=G^\pm\K(G^{\pm m})\\
		&=G^\pm\left(\sum\limits_{k=0}^N \binom{m+k-1}{k}V^{k}R_\pm(2k+2m)+F_N\right)\\
		&=G^\pm\left(P\sum\limits_{k=0}^N\binom{m+k}{k}V^{k}R_\pm(2k+2m+2)-E_N+F_N\right)\\
		&=\sum\limits_{k=0}^N\binom{m+k}{k}V^{k}R_\pm(2k+2m+2)+G^\pm(F_N-E_N)\\
		&=:\sum\limits_{k=0}^N\binom{m+k}{k}V^{k}R_\pm(2k+2m+2)+\tilde F_N.\\
		\end{align*}
		with
		\[\tilde F_N:=\K(G^{\pm m+1})-\sum\limits_{k=0}^N\binom{m+k}{k}V^{k}R_\pm(2k+2m+2)=G^\pm(F_N-E_N).\]
		By our choice of ${n'}$ and $N$, $\tilde F_N$ is $C^n$ on $W\times O$, as both $E_N|_{U\times O}$ and $F_N|_{U\times O}$ are $C^{n'}$ and supported in $J_\pm(O)\times O$. Note that the only choices we made after choosing $W$ and $O$ were that of ${n'}$ and $N$. We thus want to make the expansion independent of the choice of $N$.
		
		Let $N'\geq\frac{d}{2}+n$ be arbitrary. For $k\geq N'$, all Riesz distributions $R_\pm(2k+2m+2)$ are given by $C^n$-functions. We see that
		\begin{align*} 
		\tilde F_{N'}&=\K(G^{\pm m+1})-\sum\limits_{k=0}^{N'}\binom{m+k}{k}V^{k}R_\pm(2k+2m+2)\\
		&=\sum\limits_{k=N'+1}^{N}\binom{m+k}{k}V^{k}R_\pm(2k+2m+2)+\tilde F_N
		\end{align*}
		is $C^n$ on $W\times O$. As this is independent of $W$ and $O$, these can replaced by arbitrary relatively compact GE sets. All points in $U$ have relatively compact GE neighborhoods (choose any relatively compact neighborhood and then choose a GE neighborhood contained in it). Thus any $(y,x)\in U\times U$ has a neighborhood on which $\tilde F_{N'}$ is $C^n$, which means it is $C^n$ on all of $U\times U$. As $n$ was arbitrary, we have the desired asymptotic expansion for $m+1$ and have thus concluded our induction.
	\end{proof}

	\section{Green's ``resolvent''}
	\label{s4}
	We now consider the Green's operators for the operator $P-z$ for $z\in \C$, which is still normally hyperbolic. If we view $G^\pm_P$ as something like an inverse for $P$, then $G^\pm_{P-z}$ is something like a resolvent. We seek to derive an expansion similar to the one above, where only the Hadamard coefficients of $P$ rather than $P-z$ appear and the $z$-dependence is instead shifted to a generalzed version of Riesz distributions.

	We want to exploit the asymptotic expansion that we already have in order to obtain the one we would like to get. For this, we express the Hadamard coefficients for $P-z$ in terms of those for $P$.
	
	\begin{df}
		\label{defVz}
		For $z\in \C$,  let $V^{k}(z)$ denote the Hadamard coefficients for $P-z$.
	\end{df}
	\begin{prop}
		\label{Vz}
		We have
		\[V^{k}(z)=\sum\limits_{m=0}^k\binom{k}{m}z^mV^{k-m}.\]
	\end{prop}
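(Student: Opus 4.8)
The plan is to invoke the uniqueness part of Definition/Proposition \ref{Vdef}, applied to the operator $P-z$. The first observation is that $P$ and $P-z$ differ only by a zeroth-order term, so they induce the same connection $\nabla$; hence the operator $\rho$ of Definition \ref{dfrho}, which only involves $\Gamma$ and $\nabla$, is literally the same object whether built from $P$ or from $P-z$. Therefore the Hadamard coefficients $V^k(z)$ for $P-z$ are the unique smooth sections in $\Gamma(E\boxtimes E^*|_{U\times U})$ satisfying the transport equations $(\rho-2k)V^k(z)=2k(P-z)V^{k-1}(z)$ (with right-hand side $0$ for $k=0$) and the normalization $V^0(z)(x,x)=1$.

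Accordingly, I would set $W^k:=\sum_{m=0}^k\binom{k}{m}z^mV^{k-m}$ and simply verify that the family $(W^k)_{k\in\N}$ satisfies exactly these conditions; uniqueness then forces $W^k=V^k(z)$. The initial condition is immediate, since $W^0=V^0$ and hence $W^0(x,x)=1$.

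For the transport equations I would compute $(\rho-2k)W^k$ termwise. From the transport equations for $P$ one gets, for any $j$, the identity $(\rho-2k)V^j=(\rho-2j)V^j-2(k-j)V^j=2jPV^{j-1}-2(k-j)V^j$ (with the convention $PV^{-1}:=0$). Substituting $j=k-m$ yields
\[(\rho-2k)W^k=\sum_{m=0}^k\binom{k}{m}z^m\bigl(2(k-m)PV^{k-m-1}-2mV^{k-m}\bigr).\]
In the first sum the $m=k$ term vanishes because of the factor $k-m$, and $\binom{k}{m}(k-m)=k\binom{k-1}{m}$ collapses it to $2kP\sum_{m=0}^{k-1}\binom{k-1}{m}z^mV^{(k-1)-m}=2kPW^{k-1}$. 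In the second sum the $m=0$ term vanishes, $m\binom{k}{m}=k\binom{k-1}{m-1}$, and after the index shift $m\mapsto m+1$ this becomes $2kz\sum_{m=0}^{k-1}\binom{k-1}{m}z^mV^{(k-1)-m}=2kzW^{k-1}$. Hence $(\rho-2k)W^k=2kPW^{k-1}-2kzW^{k-1}=2k(P-z)W^{k-1}$, which is exactly the transport equation for $P-z$.

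The only points demanding care are the remark that $\rho$ is insensitive to the zeroth-order change $P\rightsquigarrow P-z$ (so that the uniqueness statement of \ref{Vdef} can be quoted verbatim for $P-z$) and the bookkeeping at the endpoints of the two sums, including the $k=0$ case where both sides vanish; neither presents a genuine difficulty, so the proof reduces to the short binomial computation above together with the appeal to uniqueness.
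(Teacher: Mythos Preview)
Your proof is correct and follows essentially the same route as the paper: define the candidate sums, verify the initial condition, and check the transport equations for $P-z$ via the same binomial identities $\binom{k}{m}(k-m)=k\binom{k-1}{m}$ and $m\binom{k}{m}=k\binom{k-1}{m-1}$, then invoke uniqueness. Your explicit remark that $\rho$ is unchanged under $P\rightsquigarrow P-z$ is a point the paper uses tacitly, so your write-up is in fact slightly more careful on that score.
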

	\begin{rem}
		The corresponding formula for the heat coefficients of a Laplace operator,
		\[a_k(\Delta-z)=\sum\limits_{m=0}^k\frac{1}{m!}z^ma_{k-m}(\Delta),\]
		can be shown analogously. Alternatively, that formula can also be deduced by Taylor expanding $e^{tz}$ and multiplying out the expansions in 
		\[e^{-t(\Delta-z)}=e^{tz}e^{-t\Delta}.\]
	\end{rem}
	\begin{proof}
		Let
		\[V_k(z):=\sum\limits_{m=0}^k\binom{k}{m}z^mV^{k-m}\]
		We need to show that $V_0(z)(x,x)=1$ and the $V_k(z)$ satisfy the transport equations.
		The former holds, as $V_0(z)=V^0$. For the latter, we calculate using the transport equation for $P$:
		\begin{align*}
		&(\rho-2k)V_k(z)\\
		&=\sum\limits_{m=0}^k\binom{k}{m}z^m(\rho-2k)V^{k-m}\\
		&=\sum\limits_{m=0}^k\binom{k}{m}z^m((\rho-2(k-m))V^{k-m}-2mV^{k-m})\\
		&=\sum\limits_{m=0}^k\binom{k}{m}z^m2(k-m)PV^{k-m-1}-\sum\limits_{m=0}^k\binom{k}{m}z^m2mV^{k-m}\\
		&=\sum\limits_{m=0}^k\binom{k-1}{m}z^m2kPV^{k-m-1}-\sum\limits_{m=1}^k\binom{k-1}{m-1}z^m2kV^{k-m}\\
		&=\sum\limits_{m=0}^{k-1}\binom{k-1}{m}z^m2kPV^{k-m-1}-\sum\limits_{m=0}^{k-1}\binom{k-1}{m}z^{m+1}2kV^{k-m-1}\\
		&=2k(P-z)\sum\limits_{m=0}^{k-1}\binom{k-1}{m}z^mV^{k-m-1}\\
		&=2k(P-z)V_{k-1}(z).
		\end{align*}
		The $V_k(z)$ thus satisfy the transport equations for $P-z$, so they are the desired Hadamard coefficients.
	\end{proof}
	Inserting this into the Hadamard expansion, we obtain our first expansion for the Green's kernel of $P-z$, an asymptotic expansion in a double sum, where the $z$ dependence is explicit:
	\begin{cor}
		\label{doubleexp}
		For every GE set $U\subseteq M$, we have the asymptotic expansion
		\[\K(G^{\pm}_{P-z})|_{U\times U}\sim\insum{k,m}\binom{k+m}{k}z^mV^{k,U}R^U_\pm(2k+2m+2).\]
	\end{cor}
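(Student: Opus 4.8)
The plan is to combine Theorem \ref{PowExp} (in the case $m=1$, i.e.\ the ordinary Hadamard expansion applied to the operator $P-z$) with the formula for the shifted Hadamard coefficients from Proposition \ref{Vz}, and then rearrange the resulting double sum so that it has the claimed shape. First I would note that $P-z$ is again normally hyperbolic, so by the remark following \ref{dfG} (and the genericity remarks about $M$ and $U$) Theorem \ref{PowExp} applies verbatim to $P-z$, giving
\[\K(G^{\pm}_{P-z})|_{U\times U}\sim\insum{k}V^{k,U}(z)\,R^U_\pm(2k+2),\]
where $V^{k,U}(z)$ are the Hadamard coefficients of $P-z$ on $U$.

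Next I would substitute the expression $V^{k,U}(z)=\sum_{j=0}^{k}\binom{k}{j}z^{j}V^{k-j,U}$ from Proposition \ref{Vz} into this expansion. Each partial sum $\sum_{k=0}^{N}V^{k,U}(z)R^U_\pm(2k+2)$ becomes a finite double sum $\sum_{k=0}^{N}\sum_{j=0}^{k}\binom{k}{j}z^{j}V^{k-j,U}R^U_\pm(2k+2)$, which I would reindex by setting $i=k-j$ (the "pure" coefficient index) and keeping $j=:m$ as the power of $z$; then $k=i+m$, $\binom{k}{j}=\binom{i+m}{i}$, and $2k+2=2i+2m+2$, so the partial sum equals $\sum_{i+m\le N}\binom{i+m}{i}z^{m}V^{i,U}R^U_\pm(2i+2m+2)$, which is exactly a partial sum (over the triangular region $\{i+m\le N\}$) of the claimed double series. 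Since the summands of a double asymptotic expansion are, by the convention stated after Definition \ref{dfsim}, just reindexed the way one reindexes double limits, this is the desired expansion.

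The only point requiring a little care — and the main (very mild) obstacle — is the bookkeeping between the "order of summation" used in Theorem \ref{PowExp} (a single sum in $k$, truncated at $N$) and the double-index notation $\sum_{k,m}$ in the statement: one must check that the triangular truncation $\{i+m\le N\}$ is cofinal among the finite "initial" regions admissible for a double asymptotic expansion, so that the $C^{n}$-remainder estimates transfer. This is immediate from Definition \ref{dfsim} together with the fact that, for fixed total degree, only finitely many pairs $(i,m)$ contribute and each fixed pair contributes a term $\binom{i+m}{i}z^{m}V^{i,U}R^U_\pm(2i+2m+2)$ whose differentiability order grows with $i+m$ (since $R_\pm(\alpha)$ is $C^{n}$ once $\Re(\alpha)>d+2n$). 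Hence any remainder of the double series over $\{i+m\le N\}$ differs from a remainder of the single series at level $N$ by finitely many already-smooth Riesz terms, and the transfer of regularity is automatic. No genuinely new analytic input beyond Theorem \ref{PowExp} and Proposition \ref{Vz} is needed.
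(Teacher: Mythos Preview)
Your proposal is correct and follows essentially the same route as the paper: apply the ordinary Hadamard expansion to $P-z$, substitute Proposition \ref{Vz}, and reindex. Your extra paragraph on the triangular truncation and cofinality is more explicit than the paper's terse ``re-indexing the summands,'' but the underlying argument is identical.
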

	\begin{proof}
		Taking the standard Hadamard expansion to $P-z$, inserting our formula for the $z$-dependent Hadamard coefficients (Proposition \ref{Vz}) and re-indexing the summands, we obtain
		\begin{align*}
		\K(G^{\pm}_{P-z})&\sim \insum{n}V^{n}(z)R_\pm(2n+2)\\
		&=\insum{n}\ \sum\limits_{k+m=n}\binom{n}{k}z^mV^{k}R_\pm(2n+2)\\
		&=\insum{k,m}\binom{k+m}{k}z^mV^{k}R_\pm(2k+2m+2)\qedhere
		\end{align*}
	\end{proof}
	This gives us an explicit expression for the $z$-dependence. We now seek to  shift the $z$-dependence into the Riesz distributions to obtain something closer to the classical Hadamard expansion. The even Riesz distributions are obtained from shifting fundamental solutions of the d'Alembert operator on Minkowski space to the manifold 
	\[R^{U}_\pm(2m)(\cdot,x)=(exp_x)^{-1*}(G^{\pm m}_{\square}\delta_0)\]
	(see Proposition \ref{RFS} and the preceding remarks). A reasonable $z$-dependent generalization is to do the same for $\square-z$ instead:
		\begin{df}
		\label{Rieszres}		
		For $z\in \C$, $x\in M$ and $m \in \N$, we define resolvent Riesz distributions as
		\[R_\pm(z,2m)(\cdot,x):=(exp_x^{-1})^*(G^{\pm m}_{\square-z}\delta_0),\]
		and the versions on $U\times U$
		\[R_\pm(z,2m)[\psi]=\int\limits_U R_\pm(z,2m)(\cdot,x)[\psi(\cdot,x)]dx.\]
		Here $\square$ is the d'Alembertian on $T_x M$.
	\end{df}
	
	To construct our final expansion, we need an expansion for the resolvent Riesz distributions in terms of the standard ones. We start by doing this in Minkowski space:
	\begin{lemma}
		On any Lorentzian vector space $V$, we have the asymptotic expansion
		\[G^{\pm k}_{\square-z}\delta_0\sim\insum{m}\binom{k+m-1}{m}z^mR^V_\pm(2m+2k)(\cdot,0).\]
	\end{lemma}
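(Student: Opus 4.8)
The plan is to realise the (divergent) Neumann series
\[ G^\pm_{\square-z}=(\square-z)^{-1}=\square^{-1}\bigl(1-z\,\square^{-1}\bigr)^{-1}, \]
which formally equals $\insum{m}z^mG^{\pm(m+1)}_\square$, as an honest finite identity with a controlled remainder, raise it to the $k$-th power, and apply it to $\delta_0$ (which, being compactly supported, lies in $\D'_\pm(V)$). Since $V$ is a Lorentzian vector space, hence a GE subset of itself, and both $\square$ and $\square-z$ are normally hyperbolic on it, Definition/Proposition \ref{dfG} makes $\square$ and $\square-z$ invertible on $\D'_\pm(V)$ with two-sided inverses $G^\pm_\square$ and $G^\pm_{\square-z}$. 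These commute, and $1-zG^\pm_\square=\square^{-1}(\square-z)$ is therefore also invertible on $\D'_\pm(V)$, with $(1-zG^\pm_\square)^{-1}G^\pm_\square=G^\pm_{\square-z}$. As everything in sight commutes, this yields the operator identity
\[ G^{\pm k}_{\square-z}=G^{\pm k}_\square\,(1-zG^\pm_\square)^{-k}\qquad\text{on }\D'_\pm(V). \]

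Next I would truncate the binomial series. For each $N\in\N$ there is a polynomial $q$ of degree at most $k-1$ with
\[ \frac{1}{(1-x)^k}=\sum_{m=0}^{N-1}\binom{k+m-1}{m}x^m+x^N\,\frac{q(x)}{(1-x)^k}; \]
indeed, clearing denominators, $1-(1-x)^k\sum_{m=0}^{N-1}\binom{k+m-1}{m}x^m$ agrees with $1$ modulo $x^N$ and so equals $x^Nq(x)$ for such a $q$. Substituting $x=zG^\pm_\square$ (legitimate on $\D'_\pm(V)$), multiplying by $G^{\pm k}_\square$, applying the result to $\delta_0$, and using $G^{\pm\ell}_\square\delta_0=R^V_\pm(2\ell)(\cdot,0)$ from Proposition \ref{RFS}, the leading part becomes exactly the asserted partial sum $\sum_{m=0}^{N-1}\binom{k+m-1}{m}z^mR^V_\pm(2m+2k)(\cdot,0)$. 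For the remainder term I would absorb the surplus powers of $G^\pm_\square$ using $(1-zG^\pm_\square)^{-1}G^\pm_\square=G^\pm_{\square-z}$ repeatedly, rewriting it in the regularity-friendly form
\[ z^N\,q(zG^\pm_\square)\,G^{\pm k}_{\square-z}\,R^V_\pm(2N)(\cdot,0). \]

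It remains to estimate this remainder. The distribution $R^V_\pm(2N)(\cdot,0)$ is $C^{n'}$ once $2N>d+2n'$, and it is supported in $J_\pm(0)$, which is past/future compact. The remainder applies to it only finitely many (at most $2k-1$) Green's operators of normally hyperbolic operators on $V$ — copies of $G^\pm_\square$ coming from $q$ together with the $k$ copies of $G^\pm_{\square-z}$ — and by Proposition \ref{Greg}, which applies verbatim to $\square$ and to $\square-z$, each such application costs only a fixed loss of differentiability over relatively compact subsets while keeping the support past/future compact. Hence, given any $n\in\N$, choosing $N$ large enough makes the remainder $C^n$ on every relatively compact subset of $V$, and therefore $C^n$; as $n$ was arbitrary, this is precisely the claimed asymptotic expansion. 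The main obstacle I anticipate is purely organisational: producing the exact finite identity whose remainder has the displayed shape (the commutations among $G^\pm_\square$, $G^\pm_{\square-z}$ and the polynomial $q(zG^\pm_\square)$ must be tracked carefully), since it is this shape on which the differentiability estimates of Section \ref{s3} bite; once it is in place, the conclusion is immediate.
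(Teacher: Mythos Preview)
Your argument is correct and takes a genuinely different route from the paper. The paper simply applies Theorem \ref{PowExp} with $P=\square-z$ on the Lorentzian vector space $V$, obtaining
\[G^{\pm k}_{\square-z}\delta_0\sim\insum{m}\binom{k+m-1}{m}V^{m}(z)(\cdot,0)R^V_\pm(2m+2k)(\cdot,0),\]
and then observes that the Hadamard coefficients of $\square$ are $V^0=1$, $V^m=0$ for $m\geq1$, so Proposition \ref{Vz} gives $V^m(z)=z^m$; inserting this finishes the proof in two lines. Your approach, by contrast, bypasses Hadamard coefficients entirely: you exploit the algebraic resolvent identity $G^{\pm k}_{\square-z}=G^{\pm k}_\square(1-zG^\pm_\square)^{-k}$, truncate the negative-binomial series for $(1-x)^{-k}$, and handle the remainder directly via Proposition \ref{RFS} and the regularity-loss estimates of Section \ref{s3}. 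This is more elementary in that it uses only Proposition \ref{RFS} and the mapping properties of Green's operators, not the full Hadamard machinery; the price is a longer, more hands-on argument. One small point to tighten: when you say ``choosing $N$ large enough makes the remainder $C^n$ on every relatively compact subset of $V$, and therefore $C^n$'', the $N$ furnished by the regularity estimates a priori depends on the relatively compact set $W$. You should note, as in the proof of Theorem \ref{PowExp}, that once $N'$ is large enough that $R^V_\pm(2m+2k)$ is $C^n$ for all $m\geq N'$, the difference between the remainders at $N(W)$ and at the fixed $N'$ is a finite sum of $C^n$ terms, so the remainder at $N'$ is $C^n$ on every such $W$ and hence globally.
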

	\begin{proof}
		Theorem \ref{PowExp} applied to $\square-z$ gives us the asymptotic expansion
		\[G^{\pm k}_{\square-z}\delta_0=\K(G^{\pm k}_{\square-z})(\cdot,0)\sim\insum{m}\binom{k+m-1}{m}V^ m(z)(\cdot,0)R^V_\pm(2m+2k)(\cdot,0),\]
		where $V^m(z)$ denotes the Hadamard coefficients for $\square-z$. The Hadamard coefficients of $\square$ are given by 
		\[V^k=\begin{cases}^ {1, \ \ k=0}_{0,\ \ k\neq0}\end{cases},\]
		as this makes all transport equations become $0=0$. Thus, by Proposition \ref{Vz}, the Hadamard coefficients for $\square-z$ are given by 
		\[V^k(z)=z^m.\]
		Inserting this, we obtain
		\[G^{\pm k}_{\square-z}\delta_0\sim\insum{m}\binom{k+m-1}{m}z^mR^V_\pm(2m+2k)(\cdot,0).\qedhere\]
	\end{proof}
	This carries over from Minkowski space to $M$ in a straightforward way:
	\begin{lemma}
		\label{Rzex}
		We have the asymptotic expansion
		\[R_\pm(z,2k)\sim \insum{m}\binom{k+m-1}{m}z^mR^{U}_\pm(2m+2k).\]
	\end{lemma}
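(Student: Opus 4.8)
The plan is to transfer the Minkowski-space expansion from the previous lemma to $U$ pointwise in the basepoint $x$ via the exponential map, and then upgrade this fibrewise statement to an honest asymptotic expansion of distributions on $U\times U$. Recall that by Definition \ref{Rieszres}, $R_\pm(z,2k)(\cdot,x)=(\exp_x^{-1})^*(G^{\pm k}_{\square-z}\delta_0)$, where $\square$ is the d'Alembertian on $T_xM$, and similarly $R^U_\pm(2m+2k)(\cdot,x)=(\exp_x^{-1})^*(R^{T_xM}_\pm(2m+2k)(\cdot,0))$ (stated just after Definition/Proposition \ref{dfRiesz}). So first I would fix $x\in U$ and apply the previous lemma on the Lorentzian vector space $V=T_xM$; this gives
\[G^{\pm k}_{\square-z}\delta_0\sim\insum{m}\binom{k+m-1}{m}z^mR^{T_xM}_\pm(2m+2k)(\cdot,0)\]
as distributions on $T_xM$. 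Pulling back along the diffeomorphism $\exp_x^{-1}$ is a topological isomorphism of distribution spaces that preserves differentiability order of a section (a $C^n$ remainder pulls back to a $C^n$ remainder), so for each fixed $x$ we get
\[R_\pm(z,2k)(\cdot,x)\sim\insum{m}\binom{k+m-1}{m}z^mR^U_\pm(2m+2k)(\cdot,x).\]

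The main work is then to promote this family of fibrewise expansions, parametrised by $x\in U$, to a single asymptotic expansion on $U\times U$ in the sense of Definition \ref{dfsim} — i.e.\ I must show that for each $n$ there is $N_0$ such that for $N\geq N_0$ the remainder
\[R_\pm(z,2k)-\sum_{m=0}^N\binom{k+m-1}{m}z^mR^U_\pm(2m+2k)\]
is $C^n$ jointly in both variables on $U\times U$. The key point is that the cut-off index $N_0$ can be chosen \emph{uniformly in $x$}: in the Minkowski lemma the remainder after $N$ terms is, by the proof of Theorem \ref{PowExp} (the term $E_N$), a fixed multiple of $(PV^N)R^V_\pm(2N+2k+2)$ with $P=\square-z$ and $V^N(z)=z^N$ constant, so $PV^N=0$ for $N\geq 1$ and the remainder is literally a single Riesz distribution $\binom{k+N-1}{N}z^NR^V_\pm(2N+2k)$ plus the tail — in any case something that is $C^n$ as soon as $\Re(2N+2k)>d+2n$, a condition independent of $x$. (Alternatively one argues directly: the tail $\sum_{m>N}\binom{k+m-1}{m}z^mR^U_\pm(2m+2k)$ consists of $C^n$-functions once $2N+2k>d+2n$, and the $z$-powers and binomials grow at most polynomially so do not affect convergence of the smooth tail — though the cleanest route is to quote the remainder structure from Theorem \ref{PowExp} directly.)

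Concretely I would run the argument exactly as in Case 2 of the proof of Theorem \ref{PowExp}: localise to relatively compact GE sets $W,O\subseteq U$, use that $\exp$ depends smoothly on the basepoint so that the pullback construction behaves well in families on $W\times O$, invoke the uniform remainder bound to get a $C^n$ remainder on $W\times O$, and then note that $U$ is covered by such products to conclude the remainder is $C^n$ on all of $U\times U$. The one genuine obstacle is the joint smoothness in $x$: I must be careful that $(x,y)\mapsto R^U_\pm(\alpha)(y,x)$ and $(x,y)\mapsto R_\pm(z,2k)(y,x)$ are the actual distributions on $U\times U$ defined in \ref{dfRiesz} and \ref{Rieszres}, and that the fibrewise remainder estimate, being uniform, does assemble into joint $C^n$-regularity — this is the same subtlety handled in Theorem \ref{PowExp} and in the cited background on Riesz distributions in \cite[sections 1.2 and 1.4]{BGP}, so I would lean on those rather than redo the parameter-dependence analysis from scratch.
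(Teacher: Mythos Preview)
Your overall strategy---pull back the Minkowski-space expansion via the exponential map---is exactly the paper's. The difference, and the point where your argument has a genuine gap, is precisely the joint-regularity step you yourself flag as the ``genuine obstacle''.

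The paper resolves this cleanly with a device you are missing: since $U$ is convex and hence contractible, $TU$ is trivial, so one fixes an isometric, time-orientation-preserving trivialization $\phi\colon U\times\R^d\to TU$. Because each fibre map $\phi_x$ is an isometry, the Minkowski-space lemma needs to be applied only \emph{once}, on the fixed space $\R^d$, producing a single remainder $F\in C^n(\R^d)$ that does not depend on $x$. Pulling back, the remainder on $U\times U$ becomes $F\circ\phi^{-1}\circ\Exp^{-1}$, where $\Exp$ is the exponential map with variable basepoint; this is manifestly $C^n$ in both variables as the composition of a $C^n$ function with a smooth map. No assembly of an $x$-dependent family of remainders on varying tangent spaces is required.

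Your proposed workaround---leaning on Theorem \ref{PowExp} or on \cite{BGP}---does not actually close the gap. The joint-regularity argument in Theorem \ref{PowExp} goes through the mapping properties of $G^\pm$ (Proposition \ref{Greg}), which is not what is in play here; and the cited sections of \cite{BGP} establish that $R^U_\pm(\alpha)$ is a well-defined distribution on $U\times U$, not that an arbitrary family of fibrewise-$C^n$ remainders assembles to a jointly $C^n$ function. A uniform cut-off index (which you correctly identify) gives fibrewise $C^n$-regularity uniformly in $x$, but that alone does not yield joint $C^n$-regularity without controlling the $x$-derivatives of the remainder---exactly what the trivialization trick sidesteps. (As a minor aside: your computation $PV^N=0$ is off, since $(\square-z)z^N=-z^{N+1}$; but your fallback ``in any case $C^n$ once $2N+2k>d+2n$'' is correct and is all that is needed on the Minkowski side.)
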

	\begin{proof}
		As $U$ is convex, it is contractible and thus $TU$ is trivial. Let $\phi\colon U\times\R^d\rightarrow TU$ be an isometric trivialization that preserves time-orientation and denote by $\phi_x$ its restriction to the fibre over $x$, identifying $\{x\}\times\R^d$ with $\R^d$.
		As $\phi_x$ is a time-orientation preserving isometry, it preserves all objects defined only in terms of the metric and time-orientation, i.e.
		\[\phi_x^{-1*}(R^{\R^ d}_\pm(2m+2k))=R^{T_x M}_\pm(2m+2k)\]
		and
		\[\phi_x^{-1*}(G^{\pm k}_{\square_{\R^d}-z}\delta_0)=G^{\pm k}_{\square_{T_xM}-z}\delta_0.\]
		Let \[\Exp\colon \Dom(\Exp)\subseteq TU\rightarrow U\times U\]
		denote the exponential map with variable basepoint, i.e. $\Exp(v)=(x,\exp_x(v))$ for $v\in T_xU$.
		
		Let $n\in\N$ be arbitrary. For $N$ large enough such that
		\[G^{\pm k}_{\square_{\R^d}-z}\delta_0=\sum\limits_{m=0}^{N}\binom{k+m-1}{m}z^mR^{\R^ d}_\pm(2m+2k)(\cdot,0)+F\]
		with $F\in C^n(\R^d)$,
		we have
		\begin{align*}
		&R_\pm(z,2k)(\cdot,x)\\
		&=(\exp_x)^{-1*}(G^{\pm k}_{\square_{T_xM}-z}\delta_0)\\
		&=(\exp_x\circ\phi_x)^{-1*}(G^{\pm k}_{\square_{\R^d}-z}\delta_0)\\
		&=(\exp_x\circ\phi_x)^{-1*}\left(\sum\limits_{m=0}^{N}\binom{k+m-1}{m}z^mR^{\R^ d}_\pm(2m+2k)(\cdot,0)+F\right)\\
		&=\sum\limits_{m=0}^{N}\binom{k+m-1}{m}z^m(exp_x)^{-1*}R^{T_xM}_\pm(2m+2k)(\cdot,0)+F\circ\phi_x^{-1}\circ exp_x^{-1}\\
		&=\sum\limits_{m=0}^{N}\binom{k+m-1}{m}z^mR^{U}_\pm(2m+2k)(\cdot,x)+F\circ\phi^{-1}\circ \Exp^{-1}(\cdot,x).\\
		\end{align*}
		as $F\circ\phi^{-1}\circ \Exp^{-1}$ is $C^n$ (also as a function of $x$) and $n$ was arbitrary, we have the desired expansion.
	\end{proof}
	
	We now have everything in place to prove the main result of this section: If, in the asymptotic expansions for the Green's operators of $P$, we replace the standard Riesz distributions with the resolvent Riesz distributions for $z\in \C$, then we obtain asymptotic expansions for the Green's operators of $P-z$.
	\begin{thm}
		\label{Hadamexpz}
		If $U\subseteq M$ is GE, we have the asymptotic expansion
		\[\K(G^{\pm}_{P-z})|_{U\times U}\sim \insum{k}V^{k,U}R^U_\pm(z,2k+2).\]
		More generally,
		\[\K(G^{\pm}_{P-z}\null^m)|_{U\times U}\sim \insum{k}\binom{k+m-1}{k}V^{k,U}R^U_\pm(z,2k+2m).\]
	\end{thm}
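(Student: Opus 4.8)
The plan is to take the claimed right-hand side, expand each resolvent Riesz distribution $R^U_\pm(z,2k+2m)$ back into ordinary Riesz distributions via Lemma \ref{Rzex}, and verify that the resulting double sum is precisely the $z$-explicit double-sum expansion of $\K(G^{\pm}_{P-z}\null^m)$ coming from the earlier results. So first I would establish that double-sum expansion for general $m$. Since $P-z$ is again normally hyperbolic, Theorem \ref{PowExp} applies to it and yields
\[\K(G^{\pm}_{P-z}\null^m)|_{U\times U}\sim\insum{n}\binom{m+n-1}{n}V^n(z)R_\pm(2n+2m),\]
where $V^n(z)$ are the Hadamard coefficients of $P-z$ (for $m=1$ this is just the classical Hadamard expansion, already used for Corollary \ref{doubleexp}). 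Substituting $V^n(z)=\sum_{a=0}^n\binom{n}{a}z^aV^{n-a}$ from Proposition \ref{Vz}, putting $k=n-a$, and using the identity $\binom{m+n-1}{n}\binom{n}{a}=\binom{k+m-1}{k}\binom{k+m+a-1}{a}$ (both sides equal $\frac{(m+n-1)!}{(m-1)!\,k!\,a!}$), the $n$-th term becomes a sum over $k+a=n$, and one obtains
\[\K(G^{\pm}_{P-z}\null^m)|_{U\times U}\sim\insum{k,a}\binom{k+m-1}{k}\binom{k+m+a-1}{a}z^aV^{k}R_\pm(2k+2a+2m),\]
a double sum whose natural partial sums are the triangular ones $\sum_{k+a\le N}$.

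Next I would match this against the target. By Lemma \ref{Rzex} with parameter $k+m$, each resolvent Riesz distribution satisfies $R_\pm(z,2k+2m)\sim\sum_a\binom{k+m+a-1}{a}z^aR^U_\pm(2a+2k+2m)$, so formally $\sum_k\binom{k+m-1}{k}V^kR_\pm(z,2k+2m)$ expands into exactly the double sum just displayed. To turn this into a proof in the sense of Definition \ref{dfsim}, fix $n\in\N$; the key fact is that the differentiability order of a term $z^aV^kR_\pm(2k+2a+2m)$ depends only on the total index $k+a$ (by the remark after Definition \ref{dfRiesz} it is $C^n$ as soon as $2(k+a+m)>d+2n$). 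Then, for $N$ large: the triangular partial sum $\sum_{k+a\le N}$ of the double family equals $\sum_{n=0}^{N}\binom{m+n-1}{n}V^n(z)R_\pm(2n+2m)$, which differs from $\K(G^{\pm}_{P-z}\null^m)$ by a $C^n$-section; for the finitely many $k\le N$ one replaces $R_\pm(z,2k+2m)$ by a long enough truncation of its Lemma \ref{Rzex}-expansion, incurring only a $C^n$-remainder; and the difference between the resulting rectangular partial sum $\sum_{k\le N}\sum_{a\le A}$ and the triangular one is a \emph{finite} sum of terms with $k+a>N$, hence $C^n$. Assembling these three $C^n$-controlled pieces shows that $\K(G^{\pm}_{P-z}\null^m)-\sum_{k=0}^{N}\binom{k+m-1}{k}V^kR_\pm(z,2k+2m)$ is $C^n$ for all large $N$; since $n$ was arbitrary this is the claimed expansion, and its first displayed form is the case $m=1$.

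I expect this last step to be the main obstacle: rearranging a genuinely doubly-indexed family of distributions into a single sum over $k$ of resolvent Riesz distributions requires justifying a nesting of asymptotic expansions and, in particular, reconciling the rectangular partial sums produced by truncating the inner Lemma \ref{Rzex}-expansions with the triangular partial sums supplied by the outer expansion. The remaining ingredients — invoking Theorem \ref{PowExp} for $P-z$, substituting Proposition \ref{Vz}, and verifying the one combinatorial identity — are routine.
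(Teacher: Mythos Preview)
Your proposal is correct and follows essentially the same route as the paper: apply Theorem \ref{PowExp} to $P-z$, substitute Proposition \ref{Vz}, use the same binomial identity (your $\binom{m+n-1}{n}\binom{n}{a}=\binom{k+m-1}{k}\binom{k+m+a-1}{a}$ is the paper's $\binom{a}{b}\binom{b}{c}=\binom{a}{c}\binom{a-c}{b-c}$ in different notation), and invoke Lemma \ref{Rzex} to collapse the inner sum. You are actually more careful than the paper about the one subtle point---justifying the nesting of the two $\sim$-expansions via the triangular-vs-rectangular partial-sum argument---which the paper handles only implicitly.
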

	\begin{proof}
		Inserting our formula for the $z$-dependent Hadamard coefficients (Proposition \ref{Vz}) into the expansion for powers in Theorem \ref{PowExp}, applied with $P-z$ instead of $P$, we obtain
		\begin{align*}
		&\K(G^{\pm}_{P-z}\null^m)\\
		&\sim \insum{k}\binom{k+m-1}{k}V^{k}(z)R_\pm(2k+2m)\\
		&=\insum{k}\binom{k+m-1}{k}\sum\limits_{l+n=k}\binom{k}{n}z^nV^{l}R_\pm(2k+2m)\\
		&=\insum{l}\insum{n}\binom{n+l+m-1}{n+l}\binom{n+l}{n}z^nV^{l}R_\pm(2(n+m+l))\\
		&=\insum{l}\insum{n}\binom{n+l+m-1}{n}\binom{l+m-1}{l}z^nV^{l}R_\pm(2(n+m+l))\\
		&=\insum{l}\binom{l+m-1}{l}V^{l}\insum{n}\binom{n+l+m-1}{n}z^nR_\pm(2(n+m+l))\\
		&\sim\insum{l}\binom{l+m-1}{l}V^{l}R_\pm(z,2m+2l).\\
		\end{align*}
		Here we used the identity
		\[\binom{a}{b}\binom{b}{c}=\binom{a}{c}\binom{a-c}{b-c}\]
		and Lemma \ref{Rzex}.
		
		This proves the second claim, the first claim follows as the special case $m=1$.
	\end{proof}
\newpage
\printbibliography

@thesis{Ron,
	author = {Lennart Ronge},
	title = {Extracting Hadamard Coefficients from Green's Operators},
	school = {Rheinische Friedrich-Wilhelms-Universit\"at Bonn},
	year = 2023,
	url = {https://hdl.handle.net/20.500.11811/10718}
}

@article{DaWr2,
	author = {Nguyen Viet Dang and Micha{\l} Wrochna},
	title = {Dynamical residues of {Lorentzian} spectral zeta functions},
	journal = {Journal de l{\textquoteright}\'Ecole polytechnique {\textemdash} Math\'ematiques},
	pages = {1245--1292},
	publisher = {\'Ecole polytechnique},
	volume = {9},
	year = {2022},
	doi = {10.5802/jep.205},
	language = {en},
	url = {https://jep.centre-mersenne.org/articles/10.5802/jep.205/}
}

@Book{Gun,
	title = {Huygens' principle and hyperbolic equations},
	series = {Perspectives in mathematics ; 5},
	author = {G{\"u}nther, Paul}, 
	address = {Boston},
	publisher = {Acad. Pr},
	year = {1988},
	keywords ={Huygenssches Prinzip, Hyperbolische Differentialgleichung, Mathematics},
	isbn ={9780123073303, 0123073308},
}

@Book{Fri,
	title = {The wave equation on a curved space-time},
	series = {Cambridge monographs on mathematical physics ; 2},
	author = {Friedlander, Friedrich G}, 
	address = {Cambridge u.a.},
	publisher = {Cambridge Univ. Pr},
	year = {1975},
	isbn ={9780521205672, 0521205670},
	lccn = {FRI},
	lccn = {2.3 FRIED},
}

@misc{BS,
	doi = {10.48550/ARXIV.2012.01364},
	
	url = {https://arxiv.org/abs/2012.01364},
	
	author = {B\"ar, Christian and Strohmaier, Alexander},
	
	keywords = {Differential Geometry (math.DG), Mathematical Physics (math-ph), FOS: Mathematics, FOS: Mathematics, FOS: Physical sciences, FOS: Physical sciences, Primary 58J20, 58J45, secondary 35L02, 35L05, 58J32},
	
	title = {Local Index Theory for Lorentzian Manifolds},
	
	publisher = {arXiv},
	
	year = {2020},
	
	copyright = {arXiv.org perpetual, non-exclusive license}
}

@article{GH,
	doi = {10.1007/s00220-014-2097-7},
  
	url = {https://doi.org/10.1007%2Fs00220-014-2097-7},
  
	year = 2014,
  
	publisher = {Springer Science and Business Media {LLC}
},
  
	volume = {333},
  
	number = {3},
  
	pages = {1585--1615},
  
	author = {Christian B{\"a}r},
  
	title = {Green-Hyperbolic Operators on Globally Hyperbolic Spacetimes},
  
	journal = {Communications in Mathematical Physics}
}

@book{BGP,
author = {B\"ar, Christian and Ginoux, Nicolas and Pfaeffle, Frank},
year = {2007},
pages = {},
title = {Wave Equations on Lorentzian Manifolds and Quantization},
isbn = {978-3-03719-037-1},
doi = {10.4171/037}
}

@misc{DaWr,
  doi = {10.48550/ARXIV.2012.00712},
  
  url = {https://arxiv.org/abs/2012.00712},
  
  author = {Dang, Nguyen Viet and Wrochna, Michal},
  
  keywords = {Analysis of PDEs (math.AP), Mathematical Physics (math-ph), Spectral Theory (math.SP), FOS: Mathematics, FOS: Mathematics, FOS: Physical sciences, FOS: Physical sciences},
  
  title = {Complex powers of the wave operator and the spectral action on Lorentzian scattering spaces},
  
  publisher = {arXiv},
  
  year = {2020},
  
  copyright = {arXiv.org perpetual, non-exclusive license}
}

@article{DeFo,
	title = {Hadamard renormalization of the stress-energy tensor for a quantized scalar field in a general spacetime of arbitrary dimension},
	author = {D\'ecanini, Yves and Folacci, Antoine},
	journal = {Phys. Rev. D},
	volume = {78},
	issue = {4},
	pages = {044025},
	numpages = {27},
	year = {2008},
	publisher = {American Physical Society},
	doi = {10.1103/PhysRevD.78.044025},
	url = {https://link.aps.org/doi/10.1103/PhysRevD.78.044025}
}

@article{Min,
	doi = {10.1007/s00605-014-0699-y},
  
	url = {https://doi.org/10.1007%2Fs00605-014-0699-y},
  
	year = 2014,
  
	publisher = {Springer Science and Business Media {LLC}
},
  
	volume = {177},
  
	number = {4},
  
	pages = {569--625},
  
	author = {E. Minguzzi},
  
	title = {Convex neighborhoods for Lipschitz connections and sprays},
  
	journal = {Monatshefte f{\"u}r Mathematik}
}
\end{document}